\newtheorem*{rep@theorem}{\rep@title}
\newcommand{\newreptheorem}[2]{%
\newenvironment{rep#1}[1]{%
 \def\rep@title{#2 \ref{##1}}%
 \begin{rep@theorem}}%
 {\end{rep@theorem}}}
\theoremstyle{plain}
\newtheorem{theorem}{Theorem}
\newtheorem{lemma}[theorem]{Lemma}
\theoremstyle{definition}
\theoremstyle{definition}
\newtheorem{definition}[theorem]{Definition}
\newcounter{example}
\renewcommand{\bar}{\overline}
\tikzstyle{vertex}=[draw,thick,fill=white,circle,inner sep=2pt]
\newcommand{%
	\begin{figure}[ht]
		\makebox[\textwidth][c]{%
			\input{}%
			}
			\caption{}
			\label{}
	\end{figure}%
	}[3]{%
	\begin{figure}[ht]
		\makebox[\textwidth][c]{%
			\input{#1}%
			}
			\caption{#2}
			\label{#3}
	\end{figure}%
	}
\newcommand{\modstrong}{\ \raisebox{1pt}{\rotatebox[origin=c]{90}{\Bowtie}}\;}
\newcommand{\strongprod}{\boxtimes}
\newcommand{\boxprod}{\square}
\definecolor{codegreen}{rgb}{0,0.4,0}
\definecolor{codegray}{rgb}{0.2,0.2,0.2}
\definecolor{codered}{rgb}{0.6,0,0}
\definecolor{backcolour}{rgb}{0.9,0.9,0.9}
\definecolor{black}{rgb}{0,0,0}
\title{Two Distinct Eigenvalues from a New Graph Product}
\author{Eric Culver\footnote{Brigham Young University, Provo, UT, \emph{eric.culver@mathematics.byu.edu}} and Mark Kempton\footnote{Brigham Young University, Provo, UT, \emph{mkempton@mathematics.byu.edu}}}
\date{}
\begin{document}

\maketitle

\begin{abstract}
    The parameter $q(G)$ of a graph $G$ is the minimum number of distinct eigenvalues of a symmetric matrix whose pattern is given by $G$.  We introduce a novel graph product by which we construct new infinite families of graphs that achieve $q(G)=2$.  Several graph families for which it is already known that $q(G)=2$ can also be thought of as arising from this new product.
\end{abstract}

\section{Introduction}

Graph inverse eigenvalue problems have been the subject of extensive research for many years now, and most recently considerable attention has been given to the parameter $q(G)$ of a graph $G$, which is the minimum number of distinct eigenvalues that can be achieved by a symmetric matrix whose pattern is given by a graph.  More specifically, for a graph $G=(V(G),E(G))$, we define \[\mathcal{S}(G)=\{\text{symmetric matrices } A=[a_{ij}] \mid a_{ij}=0 \text{ for }i\neq j \iff ij \not\in E(G)\}.\]  Note that no restriction is put on the diagonal entries of $A$. The \emph{Inverse Eigenvalue Problem for Graphs (IEPG)} asks what spectra can be achieved by matrices $A\in\mathcal{S}(G)$ given the graph $G$ (see \cite{barrett2020inverse,hogben2022inverse}).  This is a difficult problem in general, and most work in the area addresses subquestions such as the minimum rank, maximum nullity, and maximum multiplicity problems \cite{adm2019achievable,aim2008zero,hogben2022inverse}.

One important subproblem of the IEPG is to understand the possible multiplicities of eigenvalues of matrices in $\mathcal{S}(G)$.  This entails understanding the possible numbers of distinct eigenvalues of matrices in $\mathcal{S}(G)$.  To this end, for a symmetric matrix $A$, we first define $q(A)$ to be the number of distinct eigenvalues of $A$, and then
\[
q(G) = \min\{q(A) \mid A\in\mathcal{S}(G)\}.
\]

The $q(G)$ parameter was introduced in \cite{leal2002minimum} and has been extensively studied.  In general, determining $q(G)$ can still be quite challenging, but many results have been found \cite{ahmadi2013minimum,barrett2015generalizations,leal2002minimum,hogben2022inverse,levene2019nordhaus,levene2022orthogonal}.  Note that it is easy to characterize graphs for which $q(G)=1$: since the matrices in $\mathcal{S}(G)$ are symmetric, then $q(A)=1$ if and only if $A$ is a multiple of the identity, which implies that $q(G)=1$ if and only if $G$ has no edges.  The problem becomes much more subtle already when we investigate graphs which achieve exactly two distinct eigenvalues.  Graphs for which $q(G)=2$ have turned out to be interesting and difficult to characterize \cite{ahmadi2013minimum,barrett2023sparsity}.  Families of graphs with $q(G)=2$ have been constructed and studied in \cite{barrett2023regular,barrett2024graphs}. Of note, \cite{ahmadi2013minimum} implies that any connected graph can appear as an induced subgraph of a graph with $q(G)=2$ making a complete characterization of these graphs challenging (see also \cite{abiad2023bordering}).  

In this note, we attempt to unify some of the families of graphs with $q(G)=2$ by viewing them as resulting from graph products. Some of our constructions will involve the well-known strong product of graphs $G\strongprod H$.  We will also as introduce a new graph product, a variant of the strong product, which we call the \emph{modified strong product}, which we will denote using $G\modstrong H$. See Definition \ref{def:modstrong} below. Matrices whose pattern corresponds to strong products and modified strong products can be described easily using tensor products of matrices for the factors.  Using this we construct new families of graphs with $q(G)=2$, as well as view some known examples with $q(G)=2$ as arising from this new product.  A surprising connection with the chromatic index of a graph will be explored.  Specifically, for our main results, we will prove that all of the following families of graphs can achieve 2 distinct eigenvalues (see Theorems \ref{thm:onefactor}, \ref{thm:maxdeg}, and \ref{thm:complete} below):
\begin{itemize}
    \item $G\modstrong K_k$ where $G$ is $k$ regular with chromatic index $k$.
    \item $G\strongprod K_{k+1}$ where $G$ has maximum degree $k$.
    \item $G\strongprod K_c$ where $G$ arises from a linear hypergraph with maximum degree $k$ and chromatic index $c$ with $c>k$.
    \item $G\strongprod K_{c+1}$ where $G$ is as above, except $k=c$.
    \item $G\modstrong K_c$ in the same context, but when the hypergraph is $k$-regular, and again, $c=k$.
\end{itemize}
%\textcolor{red}{TODO: Give more details of the actual theorems.}
Each of these theorems is proven by decomposing the edges sets of the graphs into partitions where each piece has 2 eigenvalues, and then examining the tensor product of the corresponding 0-1 matrices with certain matrices arising from orthogonal matrices.

The rest of the paper will be organized as follows.  In Section \ref{sec:prod} we will introduce the modified strong product and investigate the relevant spectral properties.  In Section \ref{sec:coloring} prove the main results and describe examples achieving $q(G)=2$ from constructions using the modified strong product and the strong product.  We will end with some discussion of open questions and future directions.

\section{The modified strong product}\label{sec:prod}

We will begin with the definition of the modified strong product.  We first remind the reader of the definition of the strong product of two graphs.  Recall that the strong product $G\strongprod H$ is the graph whose vertex set is $V(G)\times V(H)$ containing all edges of the form 
\begin{itemize}
        \item $(g,h)(g,h')$ where $hh'\in E(H)$,
        \item $(g,h)(g',h)$ where $gg' \in E(G)$,
        \item $(g,h)(g',h')$ where $gg' \in E(G)$ and $hh' \in E(H)$.
    \end{itemize}
The adjacency matrix of $G\strongprod H$ is $(A_G+I)\otimes (A_H+I)$, where $\otimes$ denotes the Kronecker product of matrices, i.e.~ for an $m\times n$ matrix $M$ and $p\times q$ matrix $N$, $M\otimes N$ is the $np\times nq$ matrix whose entries can be thought of in block form, where the entries of the $i,j$ block are $m_{i,j}N$

\begin{definition}\label{def:modstrong}
    The \emph{modified strong product} of $G$ and $H$, which we will denote by $G \modstrong H$, is defined as the graph on vertex set $V(G)\times V(H)$ and whose edges are:
    \begin{itemize}
        \item $(g,h)(g',h)$ where $gg' \in E(G)$,
        \item $(g,h)(g',h')$ where $gg' \in E(G)$ and $hh' \in E(H)$.
    \end{itemize}
    \end{definition}
    Note that this is very similar to the strong product of $G$ and $H$, but we are not including the edges $(g,h)(g,h')$ for $hh' \in E(H)$. In particular, this means that $G \modstrong H$ is not necessarily isomorphic to $H \modstrong G$.  See Figure \ref{fig:modstrong}.

    Note that the adjacency matrix of $G \modstrong H$ is $A_G \otimes (A_H + I)$.

 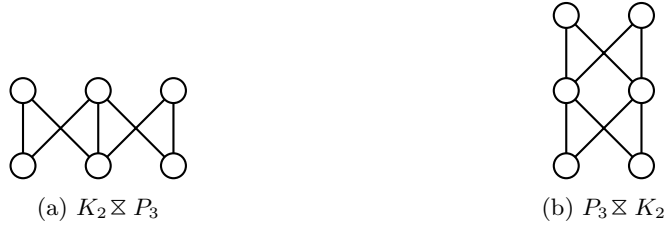
\begin{figure}
     \begin{subfigure}{0.4\textwidth}
          \centering
        \begin{tikzpicture}
        [
                vert/.style={circle,fill=white,draw=black},
                 edge/.style={thick},
             ]
          \draw[edge]   (0,0)node[vert]{}--(0,1)node[vert]{}--(1,0)node[vert]{}--(1,1)node[vert]{}--(2,0)node[vert]{}--(2,1)node[vert]{} (0,0)--(1,1) (1,0)--(2,1);
         \end{tikzpicture}
         \caption{$K_2\modstrong P_3$}
    \end{subfigure}
     \begin{subfigure}{0.4\textwidth}
          \centering
     \begin{tikzpicture}
        [
                vert/.style={circle,fill=white,draw=black},
                 edge/.style={thick},
             ]
          \draw[edge]   (0,0)node[vert]{}--(0,1)node[vert]{}--(0,2)node[vert]{}(1,0)node[vert]{}--(1,1)node[vert]{}--(1,2)node[vert]{} (0,0)--(1,1)--(0,2) (1,0)--(0,1)--(1,2);
         \end{tikzpicture}
         \caption{$P_3\modstrong K_2$}   
    \end{subfigure}
     \caption{Two examples of modified strong products.  Note in particular that $K_2\modstrong P_3$ is not isomorphic to $P_3\modstrong K_2$. }
     \label{fig:modstrong}
 \end{figure}

\begin{definition}
    For some index set $K \subseteq [k]$, let $D_K$ be the diagonal matrix:
    \[ d_{ij} = \begin{cases} 1 & \text{ if $i = j \in K$} \\ 0 & \text{ otherwise} \end{cases} \]
    For ease of notation, let $D_i = D_{\{i\}}$.
\end{definition}

\begin{lemma}
    \label{lem:existence}
    There exists an orthogonal $k$ by $k$ matrix $Q$ such that $Q D_K Q^T$ is a matrix with all nonzero entries for all $K \subseteq [k]$ except for $K = \emptyset, [k]$.
\end{lemma}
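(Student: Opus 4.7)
The plan is to interpret each entry of $QD_K Q^T$ as a polynomial in the entries of $Q$ and then show, via a genericity argument, that the finite collection of bad conditions (some entry equals zero for some proper nonempty $K$) cuts out only a proper closed subvariety of the orthogonal group $O(k)$; a generic $Q$ will avoid all of them simultaneously.

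If $q_1,\dots,q_k$ denote the columns of $Q$, then $QD_K Q^T=\sum_{i\in K}q_iq_i^T$, which is the orthogonal projection onto $\operatorname{span}\{q_i:i\in K\}$. The key structural observation is that, as $Q$ varies over $O(k)$, the matrix $QD_K Q^T$ ranges over \emph{all} rank-$|K|$ orthogonal projections on $\mathbb{R}^k$: given such a projection $P$, take an orthonormal basis of its image, an orthonormal basis of its kernel, and assemble them as the columns of $Q$ with the image vectors placed at the columns indexed by $K$. The entry $(QD_K Q^T)_{a,b}=\sum_{i\in K}q_{a,i}q_{b,i}$ is therefore a polynomial function on $O(k)$, and I claim that for every fixed proper nonempty $K$ and every $a,b\in[k]$ it does not vanish identically. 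For $a=b$, choose any $S\subseteq[k]$ with $|S|=|K|$ and $a\in S$ and take the rank-$|K|$ projection $P=\sum_{i\in S}e_ie_i^T$; its $(a,a)$-entry is $1$. For $a\ne b$, let $v=(e_a+e_b)/\sqrt{2}$ and pick $|K|-1$ distinct indices $c_1,\dots,c_{|K|-1}\in[k]\setminus\{a,b\}$, which is possible because $|K|\le k-1$; then the rank-$|K|$ projection $P=vv^T+\sum_{j}e_{c_j}e_{c_j}^T$ has $(a,b)$-entry equal to $\tfrac12$. By the correspondence above, each $P$ equals $QD_K Q^T$ for some $Q\in O(k)$.

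Since each of the finitely many polynomial conditions $(QD_K Q^T)_{a,b}=0$ fails to vanish identically, each defines a proper closed real-algebraic subvariety of the $\binom{k}{2}$-dimensional manifold $O(k)$, hence a subset of Lebesgue measure zero on each connected component. Their finite union is still a proper subvariety, so its complement is open and dense, and any $Q$ in that complement is a matrix of the required form. The single nontrivial step is the non-vanishing claim, and that reduces cleanly to exhibiting a suitable rank-$|K|$ projection as above; the remainder is a standard dimension/Baire-category argument and delivers existence rather than an explicit construction of $Q$.
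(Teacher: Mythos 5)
Your proof is correct in substance but takes a genuinely different route from the paper. The paper constructs $Q$ explicitly as a Householder reflection $H(u)=I-2uu^T$ for a carefully chosen nowhere-zero unit vector $u$ (the constant vector $\tfrac{1}{\sqrt{k}}\mathbf{1}$ for odd $k$, and a near-constant normalized vector for even $k$), computes $H(u)D_KH(u)^T$ in block form, and reduces the nowhere-zero requirement to two explicit scalar conditions on $u_K^Tu_K$ that the chosen $u$ satisfies. You instead argue by genericity on $O(k)$: each entry of $QD_KQ^T=\sum_{i\in K}q_iq_i^T$ is a polynomial in the entries of $Q$, and your exhibited rank-$|K|$ projections (a coordinate projection through $a$ for the diagonal entries, and $vv^T+\sum_j e_{c_j}e_{c_j}^T$ with $v=(e_a+e_b)/\sqrt2$ for the off-diagonal ones) show none of these finitely many polynomials vanishes identically, so a generic $Q$ avoids all zero sets. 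Your argument is shorter and makes clear the property holds for almost every $Q$, whereas the paper's yields a concrete matrix one can actually write down; both suffice for the lemma, since only existence is used downstream. One point you should tighten: $O(k)$ has two connected components, so ``does not vanish identically on $O(k)$'' does not immediately give measure zero on \emph{each} component. This is easily repaired by observing that $QD_KQ^T$ is unchanged when any single column of $Q$ is negated, an operation that swaps components, so each of your witness points can be placed in either component and the non-vanishing (hence the measure-zero conclusion) holds componentwise.
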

\begin{proof}
    We will exhibit how to construct such an orthogonal matrix $Q$ for any size $k$.  We will obtain a $Q$ with the desired properties from a \emph{Householder matrix} \[Q=H(u):=I-2uu^T\] where $u$ is a unit vector.  It is easy to see that for any unit vector $u$, $H(u)H(u)^T = H(u)^2=I$, so this always yields an orthogonal matrix.  We will show that $u$ can be chosen in such a way to yield a $Q$ with the desired property that $QD_KQ^T$ is nowhere 0 for every nonempty proper subset $K$ of $[k]$.  

    Let $K$ be a nonempty proper subset of $[k]$ and given a unit vector $u$, let $u_K$ be the vector indexed by the entries of $K$, and $u_{\bar K}$ the remaining entries of the vector. By reordering the entries if necessary, we may assume without loss of generality the elements of $K$ are indexed first.  Then \[D_K=\begin{bmatrix}
        I_K&0\\0&0
    \end{bmatrix},\] and a straightforward computation yields
    \[
    H(u)D_KH(u)^T=\begin{bmatrix}
        I_K-4(1-u_K^Tu_K)u_Ku_K^T & -2(1-2u_ K^Tu_K)u_Ku_{\bar K}^T\\-2(1-2u_K^Tu_K)u_{\bar K}u_K^T&4(u_K^Tu_K)u_{\bar K}u_{\bar K}^T
    \end{bmatrix}.
    \]
    We want this to be nowhere zero for every nonempty, proper subset $K$.  The first requirement we impose is that $u$ be a unit vector with no 0 entries.  This will guarantee that the matrices $u_Ku_K^T$, $u_Ku_{\bar K}^T$, $u_{\bar K}u_K^T$ and $u_{\bar K}u_{\bar K}^T$ will have no entries equal to 0 no matter what $K$ is.  It will also guarantee that $1-u_K^Tu_K$ is nonzero for any proper subset $K$.  Thus, $QD_KQ^T$ with have no zero entries as long as we can choose a unit vector $u$ with no zero entries that satisfies:
    \begin{itemize}
       \item $u_K^Tu_K \neq \frac12$ for \emph{every} nonempty proper subset $K$; and
        \item no diagonal entry of $(1-u_K^Tu_K)u_Ku_K^T$ is equal to $\frac14$ for every nonempty proper subset $K$.
    \end{itemize}
    There are likely many choice of a nowhere 0 unit vector $u$ satisfying these properties; we need exhibit only 1 for each value of the dimension $k$.  
    
    For $k$ odd, taking $u$ to be the constant unit vector with every entry equal to $1/\sqrt{k}$ will do. For any $K$, $u_KTu_K$ will be an integer divided by the odd number $k$, so this can never equal $1/2$, giving us the first property.  Then since $1-u_K^Tu_K$ is strictly smaller than 1, the diagonal entries of $(1-u_K^Tu_K)u_Ku_K^T$ are strictly smaller than the diagonal entries of $u_K^Tu_K$, which are all $\frac1k$, which is smaller than $\frac14$ unless $k=1,3$, and these cases can be checked to work directly.  Thus the second condition is satisfied.

    For $k$ even, we do an analogous construction, but now we take $u$ to be the vector with a 2 in one entry, 1 in all the other entries, and then normalize.  So the entries of $u$ are $2/\sqrt{k+3}$ occurring once, and $1/sqrt{k+3}$ occurring $k-1$ times.  For even $k$, $k+3$ is odd, so the proof will proceed precisely as in the odd case.   
\end{proof}

Let $Q$ be defined by the lemma above, and then let $J_i = Q D_i Q^T$ for all $i \in [k]$. Then, by the lemma, we have that any nonempty sum of the $J_i$s will have all nonzero entries except for the sum of all of them which is equal to the identity.   We can also see that these $J_i$s satisfy the following basic properties:

 \begin{lemma}
     \label{lem:properties}
     For all $i \ne j \in [k]$:
     \begin{align*}
         J_i^2 &= J_i \\
         J_i J_j &= 0
     \end{align*}
    
     Let the $i$th column of $Q$ be $q_i$. 
     Then $q_i, q_j$ are eigenvectors of $J_i$ with eigenvalues $1,0$ respectively.
 \end{lemma}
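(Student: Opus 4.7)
The plan is to observe that the $D_i$'s are mutually orthogonal diagonal projections, and then note that conjugation by an orthogonal matrix preserves these algebraic relations. All three claims will fall out of the single identity $Q^T Q = I$ together with the obvious behavior of the $D_i$'s on standard basis vectors.

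First I would record the two facts about the $D_i$'s: since $D_i$ is the diagonal matrix with a $1$ only in position $(i,i)$, we have $D_i^2 = D_i$ and $D_i D_j = 0$ for $i \neq j$. Then the first two bullets follow by a direct computation using $Q^T Q = I$:
\[
J_i^2 = Q D_i Q^T Q D_i Q^T = Q D_i^2 Q^T = Q D_i Q^T = J_i,
\]
and similarly $J_i J_j = Q D_i Q^T Q D_j Q^T = Q D_i D_j Q^T = 0$ for $i \neq j$.

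For the eigenvector claim, the key observation is that since $Q$ is orthogonal, $Q^T q_i = e_i$, the $i$th standard basis vector. Then $J_i q_i = Q D_i e_i = Q e_i = q_i$, so $q_i$ is an eigenvector of $J_i$ with eigenvalue $1$, while $J_i q_j = Q D_i e_j = 0$ when $j \neq i$, giving $q_j$ as an eigenvector with eigenvalue $0$.

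There is no serious obstacle here; the whole lemma is a three-line consequence of orthogonality of $Q$ and the fact that the $D_i$'s form an orthogonal system of rank-one diagonal projections summing to $I$. The only thing worth flagging in the writeup is that these relations together imply the $J_i$'s form a resolution of the identity into mutually orthogonal rank-one projections, which is exactly what one would need for the spectral-decomposition arguments later in the paper.
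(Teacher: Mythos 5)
Your proposal is correct and follows essentially the same route as the paper: both verify $J_i^2=J_i$ and $J_iJ_j=0$ by conjugating the identities $D_i^2=D_i$, $D_iD_j=0$ through the orthogonal matrix $Q$, and both establish the eigenvector claims via $Q^Tq_i=e_i$. The only addition is your closing remark that the $J_i$ form a resolution of the identity, which the paper states separately after Lemma \ref{lem:existence} rather than inside this proof.
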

 \begin{proof}
     \begin{align*}
         J_i^2 &= (Q D_i Q^T)(Q D_i Q^T) \\
         &= Q D_i^2 Q^T \\
         &= Q D_i Q^T \\
         &= J_i
     \end{align*}
     Also:
     \begin{align*}
         J_i J_j &= (Q D_i Q^T)(Q J_j Q^T) \\
         &= Q D_i D_j Q^T \\
         &= Q 0 Q^T \\
         &= 0
     \end{align*}
     Let $e_i$ be the vector with one in the $i$th component and zeros elsewhere. Since $Qe_i = q_i$, we see that $Q^T q_i = e_i$.
     Then we can see that:
     \begin{align*}
         J_i q_i &= Q D_i Q^T q_i \\
         &= Q D_i e_i \\
         &= Q e_i \\
         &= q_i
     \end{align*}
     Meaning $q_i$ is an eigenvector of $J_i$ with eigenvalue $1$. 
     Similarly:
     \begin{align*}
         J_i q_j &= Q D_i Q^T q_j \\
         &= Q D_i e_j \\
         &= Q 0 \\
         &= 0
     \end{align*}
     Meaning $q_j$ is an eigenvector of $J_i$ with eigenvalue $0$.
 \end{proof}
% Recall that if we have an orthonormal basis $q_1,...q_k$ of $\mathbb R^k$ and define $J_i=q_iq_i^T$, then $J_1,...,J_k$ will be a system of orthogonal idempotent projections, meaning they satisfy, for $i\neq j$, \begin{align*}
%         J_i^2&=J_i\\
%         J_iJ_j &= 0\\
%         \sum_{i=1}^k J_i &= I_k\\
%         J_i^T&=J_i
%     \end{align*}
% \begin{lemma}
%     \label{lem:properties}
%     For any positive integer $k$, there exist an orthonormal basis $q_1,...,q_k$ of $\mathbb R^k$ such that the corresponding orthogonal idempotnent projections $J_i = q_iq_i^T$ satisfy that for any $K\subset [k]$ with $K\neq \emptyset, [k]$
%     \[
%     \sum_{j\in K} J_j
%     \]
%     is a matrix with no entries equal to 0.  
% \end{lemma}
% \begin{proof}
% Start with an arbitrary orthonormal basis $q_1',...,q_k'$ leading to the orthogonal idempotents $J_1',...,J_k'$ and consider any sum \[\sum_{j\in K}J_j'.\]
% The $r,s$ entry of any one of these sums is \[\sum_{j\in K} q_j'(r)q_j'(s).\]  If this is 0, then the point $(q_1'(r)q_1'(s),...,q_k'(r)q_k'(s))$ lies on the hyperplane \[\sum_{j\in K}x_j = 0.\]
% Each of these is a hyperplane of codimension 1, and there are only finitely many, so we can multiply each $q_i'$ by an orthogonal matrix $V$ that adjusts the entries to avoid each of these hyperplanes.  Then setting $q_i = Vq_i'$ gives us the desired orthonormal basis. 
% \end{proof}

Using this we can prove the main lemma which we will use for our results.

\begin{lemma}
    \label{lem:eigenvals}
    Suppose we have $k$ $n\times n$ matrices $A_1, \ldots, A_k$ which have the following eigenvalues
    \[ \lambda_{1,1} \ldots, \lambda_{1,n}, \ldots, \lambda_{k,1} \ldots, \lambda_{k,n} \] 
    and corresponding eigenvectors 
    \[ v_{1,1}, \ldots, v_{1,n}, \ldots, v_{k,1}, \ldots, v_{k,n}. \]
    Let $M$ be the matrix:
    \[ (A_1 \otimes J_1) + \cdots + (A_k \otimes J_k). \]
    Then the the eigenvalues of $M$ are:
    \[ \lambda_{1,1}, \ldots, \lambda_{1,n}, \ldots, \lambda_{k,1} \ldots, \lambda_{k,n} \]
    with corresponding eigenvectors:
    \[ v_{1,1} \otimes q_1, \ldots, v_{1,n} \otimes q_1, \ldots, v_{k,1} \otimes q_k, \ldots, v_{k,n} \otimes q_k \]
    where $q_1, \ldots, q_k$ are the columns of $Q$.
\end{lemma}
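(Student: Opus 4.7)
The plan is to verify the claimed eigenvalue/eigenvector pairs by a direct computation using the Kronecker product identity $(X\otimes Y)(v\otimes w) = (Xv)\otimes(Yw)$, then argue that we have found all eigenvalues by a linear-independence (dimension counting) argument.

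First I would fix indices $i\in[k]$ and $j\in[n]$ and compute
\[
M(v_{i,j}\otimes q_i) \;=\; \sum_{\ell=1}^{k} (A_\ell\otimes J_\ell)(v_{i,j}\otimes q_i) \;=\; \sum_{\ell=1}^{k} (A_\ell v_{i,j})\otimes (J_\ell q_i).
\]
By Lemma \ref{lem:properties}, $J_\ell q_i = q_i$ when $\ell=i$ and $J_\ell q_i = 0$ otherwise, so every term in the sum vanishes except the $\ell=i$ term, leaving
\[
M(v_{i,j}\otimes q_i) \;=\; (A_i v_{i,j})\otimes q_i \;=\; \lambda_{i,j}\,(v_{i,j}\otimes q_i).
\]
This is exactly the eigen-relation claimed in the statement, done for all $i,j$ simultaneously.

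Next I would count: $M$ is $kn\times kn$, and the list $\{v_{i,j}\otimes q_i : i\in[k],\, j\in[n]\}$ contains $kn$ vectors. To finish I need these to be linearly independent. Since $Q$ is orthogonal, the columns $q_1,\ldots,q_k$ are mutually orthogonal unit vectors, so vectors of the form $v_{i,j}\otimes q_i$ and $v_{i',j'}\otimes q_{i'}$ with $i\ne i'$ are automatically orthogonal (their Kronecker-product inner product is $\langle v_{i,j},v_{i',j'}\rangle\langle q_i,q_{i'}\rangle=0$). Within a fixed $i$, the eigenvectors $v_{i,1},\ldots,v_{i,n}$ of the symmetric matrix $A_i$ form an orthonormal basis of $\mathbb{R}^n$, so $\{v_{i,j}\otimes q_i\}_{j=1}^n$ is linearly independent. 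Combining across $i$ yields $kn$ linearly independent eigenvectors, accounting for the full spectrum of $M$.

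The argument is essentially mechanical; the only mild subtlety is the implicit assumption that the $A_i$ are symmetric (so that the $v_{i,j}$ can be taken to form an orthogonal basis of $\mathbb{R}^n$), which is the setting in which the lemma will be applied later. The key conceptual point driving the proof is that the projectors $J_1,\ldots,J_k$ decompose $\mathbb{R}^k$ into the one-dimensional eigenlines $\mathrm{span}(q_i)$, and tensoring with $J_i$ isolates the action of $A_i$ on the $i$-th slot, so the block structure of $M$ with respect to the basis $\{q_1,\ldots,q_k\}$ of the right factor becomes block-diagonal with blocks $A_1,\ldots,A_k$.
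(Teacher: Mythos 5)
Your computation is exactly the paper's proof: both verify $M(v_{i,j}\otimes q_i)=\lambda_{i,j}(v_{i,j}\otimes q_i)$ via the Kronecker identity and the fact that $J_\ell q_i$ is $q_i$ or $0$ by Lemma \ref{lem:properties}. Your added linear-independence count (showing the $kn$ vectors exhaust the spectrum, under the implicit symmetry of the $A_i$) is a small completeness improvement the paper omits, but the approach is the same.
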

\begin{proof}
    We can see that:
    \begin{align*}
        (A_i \otimes J_i)(v_{i,l} \otimes q_i) &= A_i v_{i,l} \otimes J_i q_i \\
        &= \lambda_{i,l} v_{i,l} \otimes q_i \\
        &= \lambda_{i,l} (v_{i,l} \otimes q_i)
    \end{align*}
    Meaning that $v_{i,l} \otimes q_i$ is an eigenvector of $A_i \otimes J_i$ with eigenvalue $\lambda_{i,j}$.

    Also, for $j\neq i$, we can see that:
    \begin{align*}
        (A_i \otimes J_i)(v_{j,l} \otimes q_j) &= A_i v_{j,l} \otimes J_i q_j \\
        &= \lambda_{i,l} v_{j,l} \otimes 0 \\
        &= 0
    \end{align*}
    Meaning that $v_{j,l} \otimes q_j$ is an eigenvector of $A_i \otimes J_i$ with eigenvalue $0$.

    Therefore:
    \begin{align*}
        M(v_{i,l} \otimes q_i) &= ((A_1 \otimes J_1) + \cdots + (A_k \otimes J_k))(v_{i,l} \otimes q_i) \\ 
        &= (A_1 \otimes J_1)(v_{i,l} \otimes q_i) + \cdots + (A_i \otimes J_i)(v_{i,l} \otimes q_i) + \cdots + (A_k \otimes J_k)(v_{i,l} \otimes q_i) \\
        &= 0 + \cdots + \lambda_{i,l} v_{i,l} \otimes q_i + \cdots + 0 \\
        &= \lambda_{i,l} v_{i,l} \otimes q_i
    \end{align*}
    Which shows that $v_{i,l} \otimes q_i$ is an eigenvector of $M$ with eigenvalue $\lambda_{i,l}$.
\end{proof}

The next lemma addresses the pattern of our matrices.

\begin{lemma}
    \label{lem:blocks}
    If $A_1, \ldots, A_k$ are $n\times n$ matrices with 0,1 entries, then the pattern of
    \[ M = (A_1 \otimes J_1) + \cdots + (A_k \otimes J_k) \]
    is determined by the entries of:
    \[ A = A_1 + \cdots + A_k. \]
    Let the entries of $A$ be given by $A=[a_{i,j}]_{i,j=1}^n$.
    Specifically, $M$ is an $nk\times nk$ matrix with blocks of size $k$ by $k$. The $l,m$ block of $M$ will have the pattern:
    \[ \begin{cases} 0 & \text{ if $a_{l,m} = 0$} \\ J & \text{ if $0 < a_{l,m} < k$} \\ I & \text{ if $a_{l,m} = k$} \end{cases} \]
\end{lemma}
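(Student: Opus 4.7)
The plan is to use the block structure of the Kronecker product directly. Since each $J_i$ is a $k\times k$ matrix and each $A_i$ is $n\times n$, the matrix $M$ is naturally partitioned into $n^2$ blocks of size $k\times k$, and by the definition of the Kronecker product the $(l,m)$ block of $A_i\otimes J_i$ is simply $(A_i)_{l,m}\,J_i$. Summing over $i$, the $(l,m)$ block of $M$ is
\[
B_{l,m} \;=\; \sum_{i=1}^k (A_i)_{l,m}\, J_i.
\]
Since each $A_i$ has $0/1$ entries, define $K_{l,m}=\{i\in[k] : (A_i)_{l,m}=1\}$, so that $B_{l,m}=\sum_{i\in K_{l,m}} J_i$ and $a_{l,m}=|K_{l,m}|$.

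Next I would rewrite this sum using the factorization $J_i=QD_iQ^T$, which gives
\[
B_{l,m} \;=\; Q\!\left(\sum_{i\in K_{l,m}} D_i\right)\!Q^T \;=\; Q\,D_{K_{l,m}}\,Q^T.
\]
Now the three cases of the lemma correspond exactly to the three possibilities for $K_{l,m}$ as a subset of $[k]$. If $a_{l,m}=0$, then $K_{l,m}=\emptyset$, so $D_{K_{l,m}}=0$ and $B_{l,m}=0$. If $a_{l,m}=k$, then $K_{l,m}=[k]$, so $D_{K_{l,m}}=I$ and $B_{l,m}=QIQ^T=I$. In the remaining range $0<a_{l,m}<k$, $K_{l,m}$ is a nonempty proper subset of $[k]$, and Lemma \ref{lem:existence} guarantees that $QD_{K_{l,m}}Q^T$ has all nonzero entries, giving the $J$-pattern required.

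There is essentially no obstacle here once the previous lemmas are in hand; the only thing to check carefully is the interaction between the $0/1$ hypothesis on the $A_i$ and the interpretation of the sum $a_{l,m}$. In particular, $a_{l,m}=k$ forces every $(A_i)_{l,m}$ to equal $1$ (since each is at most $1$ and there are $k$ of them), and $a_{l,m}=0$ forces all to be $0$, so the extreme cases of the casework correspond precisely to $K_{l,m}=[k]$ and $K_{l,m}=\emptyset$, where Lemma \ref{lem:existence} gives no information and the conclusion must instead be read off directly from $D_{K_{l,m}}$.
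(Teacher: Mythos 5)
Your proof is correct and follows essentially the same route as the paper: both identify the $(l,m)$ block of $M$ as a sum of those $J_i$ with $(A_i)_{l,m}=1$ and then invoke Lemma \ref{lem:existence} (via the identity $\sum_{i\in K}J_i = QD_KQ^T$) to read off the pattern from the size of that index set. Your write-up is simply more explicit than the paper's about the two extreme cases $K=\emptyset$ and $K=[k]$, which the paper leaves implicit.
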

\begin{proof}
    Since each $A_i$ is a 0,1 matrix, the matrices $A_i \otimes J_i$ will be block matrices with blocks equal to $J_i$ or $0$. Therefore, the matrix $M$ will consist of blocks which are sums of the $J_i$. Since the pattern of a sum of $J_i$ is uniquely determined by the number of $J_i$ summed, we can determine the pattern of the blocks of $M$ from the sum of the $A_i$. 
\end{proof}

\section{Chromatic index and products with cliques}\label{sec:coloring}

%\textcolor{red}{TODO: Remind reader what the chromatic index is.}

Recall that the chromatic index of a graph $G$, denoted $\chi'(G)$, is the minimum number of colors needed to give a proper edge coloring of the graph; i.e. no two edges of the same color share a vertex.  Note that subgraph of $G$ given by all the edges of a fixed color in a proper edge coloring gives a matching, or 1-factor, in the graph.  A $k$-regular graph with chromatic index $k$ is sometimes called 1-factorable, meaning its edge set can be written as the disjoint union of $k$ sets of edges (the factors), where no edges within a factor share a vertex. 

\begin{theorem}
    \label{thm:onefactor}
    If $G$ is 1-factorable, i.e., if $G$ is $k$-regular with chromatic index $k$, then $q(G \modstrong K_k)=2$.% has $q$ number 2.
\end{theorem}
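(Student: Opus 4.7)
The plan is to apply Lemma \ref{lem:eigenvals} with the $1$-factorization of $G$ providing the block matrices $A_i$. First I would use the $1$-factorability hypothesis to decompose $E(G)$ into $k$ perfect matchings $F_1,\ldots,F_k$ (so in particular $|V(G)|$ is even), and set $A_i$ to be the adjacency matrix of $F_i$. The adjacency matrix of a perfect matching has spectrum $\{-1,+1\}$, so every $A_i$ already has only two distinct eigenvalues, and they are the same two values for every $i$. This is the key observation driving the proof.

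Next, with the matrices $J_i = QD_iQ^T$ supplied by Lemma \ref{lem:existence}, I would form
\[
M = (A_1 \otimes J_1) + \cdots + (A_k \otimes J_k).
\]
Lemma \ref{lem:eigenvals} delivers the full spectrum of $M$ as the multiset union of the spectra of the $A_i$, which is contained entirely in $\{-1,+1\}$. Thus $M$ has exactly two distinct eigenvalues.

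It remains to verify that $M \in \mathcal{S}(G \modstrong K_k)$. Since the $F_i$ partition $E(G)$, the sum $A_1 + \cdots + A_k$ equals $A_G$, a $0/1$ matrix, so by Lemma \ref{lem:blocks} the $(l,m)$ block of $M$ is zero when $lm \notin E(G)$ and is $J_i$ (nowhere zero, by Lemma \ref{lem:existence}) when $lm \in E(G)$ has color $i$; the diagonal blocks are all zero. Comparing with the adjacency matrix $A_G \otimes (A_{K_k}+I)$ of $G \modstrong K_k$, the off-diagonal pattern matches exactly, and the all-zero diagonal is harmless since $\mathcal{S}$ imposes no restriction on diagonal entries. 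Because $G \modstrong K_k$ has at least one edge we have $q \geq 2$, so $q(G \modstrong K_k) = 2$.

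The point to flag is not really an obstacle but a coincidence to exploit: Lemma \ref{lem:eigenvals} combines spectra by multiset union, so to land on only two distinct eigenvalues every $A_i$ must have spectrum inside a common two-element set. Perfect matchings are exactly what supplies this, which is why $1$-factorability is precisely the hypothesis making the argument go through with no further tinkering. I would expect the subsequent theorems in the section to require more careful spectral manipulation (for instance, shifting each factor by a scalar multiple of $I$) once the pieces of the edge decomposition are allowed to be more general than matchings.
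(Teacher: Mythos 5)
Your proof is correct and follows essentially the same route as the paper's: decompose $G$ into its $k$ perfect matchings, take $A_i$ to be their adjacency matrices (each with spectrum $\{-1,1\}$), form $M=\sum_i A_i\otimes J_i$, and apply Lemma \ref{lem:eigenvals} for the spectrum and Lemma \ref{lem:blocks} for the pattern $A_G\otimes J$. Your explicit remarks that the zero diagonal blocks are harmless and that $q\geq 2$ because the product has an edge are small details the paper leaves implicit, but the argument is the same.
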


\begin{proof}
    Let $G_i$ be the graph of the $i$th factor, and let $A_i$ be its adjacency matrix. 
    Note that the eigenvalues of each $A_i$ are $1$ and $-1$, each with multiplicity $n/2$ where $n$ is the total number of vertices. 
    Let $M$ be the matrix:
    \[ M = (A_1 \otimes J_1) + \cdots + (A_k \otimes J_k) \]
    %The matrices $A_i$ will have eigenvectors $e^+$ (eigenvalue $1$) which have two ones in the location of the endpoints of some disjoint edge of $G_i$. Also, eigenvectors $e^-$ (eigenvalue $-1$) which have a one and a negative one in the location of the endpoints of some disjoint edge of $G_i$. 
    By Lemma \ref{lem:eigenvals}, the eigenvalues of $M$ will be $-1,1$ with multiplicity.

    We can also see that $M$ will have the pattern of the adjacency matrix of $G \modstrong K_k$. Using Lemma \ref{lem:blocks}, the pattern of $M$ is determined by $A_1 + \cdots + A_k = A$ which is equal to the adjacency matrix of $G$. By that lemma, $M$ will have a block of all nonzero elements corresponding to a one in the adjacency matrix of $G$, and a block of zeros corresponding to a zero in the adjacency matrix of $G$. This is equivalent to saying that the pattern of $M$ is equal to the matrix $A \otimes J$. Since $J$ can be thought of as $I$ plus the adjacency matrix of $K_k$, this can be seen to be the adjacency matrix of $G \modstrong K_k$.

    Therefore, a symmetric matrix with the same pattern as the adjacency matrix of $G \modstrong K_k$ has only two eigenvalues, $-1,1$, meaning that $q(G \modstrong K_k)=2$. % has $q$ number 2.
\end{proof}

\begin{theorem}
    \label{thm:maxdeg}
    If connected $G$ has max degree $k$, then $q(G \strongprod K_{k+1})=2$.% has $q$ number 2.
\end{theorem}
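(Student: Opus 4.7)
The plan is to mirror the construction of Theorem~\ref{thm:onefactor} with two modifications: replace $1$-factorability by Vizing's theorem, and use the size-$(k+1)$ orthogonal system from Lemma~\ref{lem:existence}. Since $G$ has maximum degree $k$, Vizing's theorem gives a proper edge coloring of $G$ with at most $k+1$ colors; padding with empty classes if necessary, fix exactly $k+1$ color classes $M_1,\dots,M_{k+1}$, each a (possibly empty) matching.

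For each $i$, let $A_i$ be the $0,1$ adjacency matrix of $M_i$ and let $S_i\subseteq V(G)$ be the set of vertices not saturated by $M_i$. Define $B_i = A_i + D_{S_i}$, so $B_i$ is a symmetric $0,1$ permutation matrix; in particular $B_i^2=I$, and its eigenvalues lie in $\{-1,+1\}$. Setting
$$M = \sum_{i=1}^{k+1} B_i \otimes J_i,$$
Lemma~\ref{lem:eigenvals} forces every eigenvalue of $M$ to lie in $\{-1,+1\}$, so $q(M)\le 2$.

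The pattern of $M$ is read off by Lemma~\ref{lem:blocks} from $\sum_i B_i$. The off-diagonal entry at $(u,v)$ equals $A_G[u,v]\in\{0,1\}$, and the diagonal entry at $(u,u)$ equals $(k+1)-\deg_G(u)$, since in a proper edge coloring each edge at $u$ uses a distinct color. For every vertex with $1\le \deg_G(u)\le k$ this diagonal value is strictly between $0$ and $k+1$, so the corresponding diagonal block of $M$ is fully nonzero; similarly each edge block of $G$ is full and each non-edge block is zero. This matches the adjacency pattern of $G\strongprod K_{k+1}$ provided no vertex of $G$ is isolated, which holds whenever $G$ is connected with at least one edge. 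Combined with the eigenvalue bound this yields $q(G\strongprod K_{k+1})=2$. The main obstacle is controlling the diagonal blocks: in Theorem~\ref{thm:onefactor} they are zero and require no work, but for the strong product they must be fully nonzero, which is exactly why we use $k+1$ colors rather than $k$ and introduce the diagonal corrections $D_{S_i}$; an isolated vertex would force $(\sum_i B_i)[u,u]=k+1$ and collapse that block to the identity pattern, which is what connectedness rules out.
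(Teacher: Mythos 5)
Your proposal is correct and follows essentially the same route as the paper: Vizing's theorem gives $k+1$ matchings, each augmented on its unsaturated vertices to a symmetric involution with eigenvalues $\pm1$, and Lemmas \ref{lem:eigenvals} and \ref{lem:blocks} then give the spectrum and the pattern $(A_G+I)\otimes J$. Your explicit observation that each $B_i$ is a permutation matrix with $B_i^2=I$, and the count $(k+1)-\deg_G(u)$ for the diagonal entries, are just slightly more explicit versions of the paper's own justifications.
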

\begin{proof}
    Since $G$ has max degree $k$, by Vizing's theorem, it has a $k+1$-coloring of the edges.
    This coloring will be such that for any vertex $v$, at least one color will miss that vertex.
    Let $G_i$ be the subgraph of $G$ on the same vertex set but only containing the edges of color $i$.
    Construct matrices $A_1, \ldots, A_{k+1}$ such that $A_i$ has entries:
    \[ A_{i,l,m} = \begin{cases} 1 & \text{ if $l = m$ and $l$ is not incident to any edges of color $i$} \\ 1 & \text{ if $l$ adjacent to $m$ by edge of color $i$} \\ 0 & \text{ otherwise} \end{cases} \]
    
    Define $M = (A_1 \otimes J_1) + \cdots + (A_{k+1} \otimes J_{k+1})$.
    %The matrices $A_i$ will have eigenvectors $e^+$ (eigenvalue $1$) which have two ones in the location of the endpoints of some disjoint edge of $G_i$. Also, eigenvectors $e^-$ (eigenvalue $-1$) which have a one and a negative one in the location of the endpoints of some disjoint edge of $G_i$. Also, eigenvectors $v$ (eigenvalue $1$) which have a single one in the location of an isolated vertex of $G_i$.
    Notice that each $A_i$ has eigenvalues $1$ or $-1$ by construction.
    Therefore, by Lemma \ref{lem:eigenvals}, the eigenvalues of $M$ will be $-1,1$ with some multiplicities.

    We can also see that $M$ will have the pattern of the adjacency matrix of $G \strongprod K_{k+1}$.
    Using Lemma \ref{lem:blocks}, the pattern of $M$ is determined by $A_1 + \cdots + A_k = B$. The off-diagonal entries of $B$ correspond exactly to the adjacency matrix of $G$. Therefore, $M$ will have a block of all nonzero elements corresponding to a one in the adjacency matrix of $G$, and a block of zeros corresponding to a zero in the adjacency matrix of $G$. Since every vertex of $G$ fails to be incident to some color, the diagonal entries of $B$ are nonzero. Since $G$ is connected, every vertex has at least one edge incident to it, and so has at least one color incident to it, therefore, no diagonal entry of $B$ is equal to $k+1$. Therefore, every diagonal block of $M$ has all nonzero elements. Let $A$ be the adjacency matrix of $G$. We have shown that the pattern of $M$ is equal to the matrix $(A + I) \otimes J$. Since $J$ is $I$ plus the adjacency matrix of $K_{k+1}$, this has $G \strongprod K_{k+1}$ as its pattern.

    Therefore, a symmetric matrix with the same pattern as the adjacency matrix of $G \strongprod K_{k+1}$ has only two eigenvalues, $-1,1$, meaning that $q(G \strongprod K_{k+1})=2$.% has $q$ number 2.
\end{proof}

Our next construction will use the adjacency matrix of a complete graph, whose eigenvalues are well known in spectral graph theory.

\begin{lemma}
    \label{lem:complete}
    The adjacency matrix of $K_n$ has eigenvalues $n-1$ and $-1$.
\end{lemma}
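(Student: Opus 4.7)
The plan is to use the standard decomposition of the adjacency matrix of $K_n$ in terms of the all-ones matrix. Specifically, I would write $A(K_n) = J_n - I$, where $J_n$ is the $n\times n$ all-ones matrix and $I$ is the identity. This immediately reduces the problem to understanding the spectrum of $J_n$, since eigenvectors of $J_n$ are eigenvectors of $J_n - I$ with eigenvalue shifted by $-1$.

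Next, I would exhibit the eigenvectors of $J_n$ directly. The all-ones vector $\mathbf{1}$ satisfies $J_n \mathbf{1} = n \mathbf{1}$, so $\mathbf{1}$ is an eigenvector of $A(K_n)$ with eigenvalue $n-1$. For any vector $v$ orthogonal to $\mathbf{1}$ (equivalently, whose entries sum to $0$), we have $J_n v = 0$, so $v$ is an eigenvector of $A(K_n)$ with eigenvalue $-1$. The orthogonal complement of $\mathbf{1}$ is $(n-1)$-dimensional, so this accounts for the remaining $n-1$ eigenvalues.

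Since we have produced a full orthogonal basis of $\mathbb{R}^n$ consisting of eigenvectors of $A(K_n)$, and the eigenvalues obtained are $n-1$ (with multiplicity $1$) and $-1$ (with multiplicity $n-1$), we conclude that these are all the eigenvalues. There is no real obstacle here; the only thing to be careful about is noting the multiplicities so that the result is unambiguous, and that the two eigenvalues are distinct (which holds precisely when $n \geq 2$, the only case of interest for using $K_n$ as a graph factor).
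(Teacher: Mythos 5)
Your proof is correct and complete: the decomposition $A(K_n) = J_n - I$, together with the eigenvector $\mathbf{1}$ (eigenvalue $n-1$) and the $(n-1)$-dimensional orthogonal complement of $\mathbf{1}$ (eigenvalue $-1$), is the standard argument. The paper itself gives no proof of this lemma, treating it as a well-known fact from spectral graph theory, so your write-up simply supplies the expected details; your remark that the two eigenvalues coincide when $n=1$ is a reasonable point of care, though all uses in the paper have $n\geq 2$.
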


\begin{definition}
    Given a hypergraph $\mathcal{H}$, the \emph{representing graph} $G$ has the same vertex set as $\mathcal{H}$ and an edge connecting any two vertices which are in a hyperedge together. 
\end{definition}

If a hypergraph $\mathcal{H}$ is $l$-uniform, meaning every edge is of size $l$, then the edges of the representing graph will be covered by cliques of size $l$. If $\mathcal{H}$ is also linear, meaning that every pair of edges intersects in at most one vertex, then this covering by cliques will be a partition. 

We want to color the hyperedges of $\mathcal{H}$. Each color class of the edges will correspond to a subgraph of the representing graph $G$ which consists of disjoint copies of $K_l$. We can then use the fact that $K_l$ has only two eigenvalues to do a similar trick to Theorem \ref{thm:maxdeg} and show that $q(G \strongprod K_k)$ is 2 (where $k$ is the chromatic index of $\mathcal{H}$).

\begin{theorem}
    \label{thm:complete}
    Let $G$ be the representing graph of a $l$-uniform linear hypergraph $\mathcal{H}$ with maximum degree $k$. Let $\chi'(\mathcal{H}) = c$. Then:
    \begin{enumerate}[(a)]
        \item \label{case:1} If $c > k$, then $q(G \strongprod K_c)=2.$ %has $q$ number 2.
        \item \label{case:2} If $c = k$, then $q(G \strongprod K_{c+1})=2$. %has $q$ number 2.
        \item \label{case:3} If $\mathcal{H}$ is $k$-regular and $c = k$, then $q(G \modstrong K_c)=2$.%has $q$ number 2.
    \end{enumerate}
\end{theorem}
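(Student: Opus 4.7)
The plan is to handle all three parts by mimicking the strategy of Theorems \ref{thm:onefactor} and \ref{thm:maxdeg}: starting from a proper edge coloring of $\mathcal{H}$ (possibly padded by a dummy color), I will build matrices $A_1,\ldots,A_{c'}$ whose spectra are contained in $\{l-1,-1\}$, apply Lemma \ref{lem:eigenvals} to conclude that $M = \sum_i A_i\otimes J_i$ has only those two eigenvalues, and then verify that the pattern of $M$ matches the adjacency matrix of the desired product. The useful structural fact is that, because $\mathcal{H}$ is $l$-uniform and linear, each color class lifts in the representing graph $G$ to a vertex-disjoint union of $K_l$'s together with the uncovered vertices, and by Lemma \ref{lem:complete} each $K_l$-block contributes the eigenvalues $l-1$ and $-1$, so I only have to adjust the diagonal on the uncovered vertices to pin down the spectrum of each $A_i$.

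For part (a) I take $A_i$ to be $1$ on an off-diagonal entry $(r,s)$ whenever $\{r,s\}$ lies in a color-$i$ hyperedge, $-1$ on a diagonal entry $(r,r)$ whenever $r$ is missed by color $i$, and $0$ elsewhere; each $A_i$ then splits (after permutation) into $K_l$-blocks and a $(-1)$-diagonal block, so its spectrum is $\{l-1,-1\}$. Linearity of $\mathcal{H}$ makes each off-diagonal block of $M$ either zero or a single $J_i$, and each diagonal block indexed by $r$ equals $-\sum_{i\in S_r}J_i$ where $S_r$ is the set of colors missing $r$; the hypotheses $\deg_{\mathcal{H}}(r)\leq k$ and $c>k$ (assuming, as we may, that no vertex of $\mathcal{H}$ is isolated) give $1\leq |S_r|\leq c-1$, so Lemma \ref{lem:existence} makes the block nowhere zero and the pattern of $M$ becomes that of $G\strongprod K_c$. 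For part (b) I add a dummy color $c+1$ with $A_{c+1}=-I$ (spectrum $\{-1\}\subseteq\{l-1,-1\}$); this guarantees every vertex is missed by at least one of the $c+1$ colors and, since the original hypergraph still covers every vertex, that no vertex is missed by all of them, so the same pattern calculation produces $G\strongprod K_{c+1}$. For part (c), $k$-regularity combined with $c=k$ means every vertex sits in exactly one hyperedge of each color, so no diagonal correction is required: each $A_i$ is the $0/1$ adjacency matrix of a vertex-disjoint union of $K_l$'s spanning $V(G)$, the diagonal blocks of $M$ vanish, the off-diagonal blocks are single $J_i$'s, and $M$ has the adjacency pattern of $G\modstrong K_c$.

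The main obstacle is that in parts (a) and (b) the matrices $A_i$ carry $-1$ entries on their diagonals, so Lemma \ref{lem:blocks} does not literally apply. What rescues the pattern analysis is that in any single block of $M$ the nonzero coefficients multiplying the $J_i$'s all share a sign (all $+1$ off the diagonal, all $-1$ on the diagonal), so each nonzero block is $\pm\sum_{i\in S}J_i$ for some nonempty proper subset $S\subsetneq[c']$, and Lemma \ref{lem:existence} reduces the nowhere-zero check to the same existence statement that underpinned Lemma \ref{lem:blocks}.
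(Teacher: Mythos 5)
Your argument follows the same overall strategy as the paper's proof---properly color the hyperedges of $\mathcal{H}$ (padding with an extra color in case (b)), form one matrix $A_i$ per color class whose blocks are $K_l$-adjacency matrices, and combine via Lemma \ref{lem:eigenvals} together with a block-pattern analysis---but it diverges in one substantive detail, and the divergence is in your favor. The paper places a $+1$ on the diagonal of $A_i$ at each vertex missed by color $i$, which keeps the $A_i$ as $0$-$1$ matrices so that Lemma \ref{lem:blocks} applies verbatim; however, each such $A_i$ then has spectrum $\{l-1,-1,1\}$ rather than $\{l-1,-1\}$ whenever $l>2$ and some vertex is uncovered, so the paper's assertion that $M$ has only the eigenvalues $-1$ and $l-1$ does not actually follow from its own construction (the $+1$ diagonal trick is sound only in the $l=2$ setting of Theorem \ref{thm:maxdeg}). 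Your choice of $-1$ on those diagonal entries pins the spectrum of each $A_i$ inside $\{l-1,-1\}$, and you correctly pay the resulting price: the $A_i$ are no longer $0$-$1$, so Lemma \ref{lem:blocks} cannot be quoted directly, and your observation that every nonzero block of $M$ is $\pm\sum_{i\in S}J_i$ for a nonempty proper $S$ (off-diagonal blocks are single $J_i$'s by linearity of $\mathcal{H}$, and all diagonal coefficients share the sign $-1$) reduces the nowhere-zero check to Lemma \ref{lem:existence}, exactly as needed. The one loose end is your parenthetical assumption that $\mathcal{H}$ has no isolated vertices: if $r$ is isolated then $S_r$ is the full color set and the diagonal block is $-I$, whose pattern is not that of the $K_c$ fiber over $r$; this is easily dispatched (that fiber is a separate $K_c$ component, which has $q=2$ with freely adjustable eigenvalues by the lemma from the literature cited in the paper's conclusion), and the paper's own proof silently needs the same hypothesis, but it deserves an explicit sentence. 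Part (c) of your argument coincides with the paper's.
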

\begin{proof}
    In cases \ref{case:1} and \ref{case:2}, let $A_i$ be:
    \[ A_{i,a,b} = \begin{cases} 1 & \text{ if $a = b$ and $a$ is not incident to a hyperedge of color $i$} \\ 1 & \text{ if $l$ and $m$ share a hyperedge of color $i$} \\ 0 & \text{ otherwise} \end{cases} \]
    Then defining:
    \[ M = (A_1 \otimes J_1) + \cdots + (A_k \otimes J_k) \]
    We can see the eigenvalues of $M$ will be $-1$ and $l-1$ from Lemma \ref{lem:eigenvals} and \ref{lem:complete}. We can also see that the pattern of $M$ will match the pattern of the appropriate strong product by Lemma \ref{lem:blocks}. The only difference between the two cases is that in case \ref{case:2} we first construct a coloring on $c+1$ colors to ensure that at least one color misses each vertex. 

    In case \ref{case:3}, let $A_i$ be:
    \[ A_{i,a,b} = \begin{cases} 1 & \text{ if $l$ and $m$ share a hyperedge of color $i$} \\ 0 & \text{ otherwise} \end{cases} \]
    Then defining:
    \[ M = (A_1 \otimes J_1) + \cdots + (A_k \otimes J_k) \]
    We can see the eigenvalues of $M$ will be $-1$ and $l-1$ from Lemma \ref{lem:eigenvals} and \ref{lem:complete}. We can also see that the pattern of $M$ will match the pattern of $G \modstrong K_c$ by Lemma \ref{lem:blocks}.

    Therefore, we see in all cases that the appropriate graphs have $q$ equal to 2.
\end{proof}

See Figure \ref{fig:examples} for some examples of graphs which this theorem applies to.

\begin{figure}[ht]
    \centering
    \begin{subfigure}{0.4\textwidth}
        \centering
        \begin{tikzpicture}
            [
                vert/.style={circle,fill=white,draw=black},
                edge/.style={thick},
            ]
            \node[vert] (A) at (0,0) {};
            \node[vert] (B1) at (1.7,1) {};
            \node[vert] (B2) at (1.7,-1) {};
            \node[vert] (C1) at (-1.7,1) {};
            \node[vert] (C2) at (-1.7,-1) {};
            \draw[edge] (A) -- (B1) -- (B2) -- (A);
            \draw[edge] (A) -- (C1) -- (C2) -- (A);
        \end{tikzpicture}
        \caption{Bowtie $\strongprod K_2$}
    \end{subfigure}
    \hfill
    \begin{subfigure}{0.4\textwidth}
        \centering
        \begin{tikzpicture}
            [
                vert/.style={circle,fill=white,draw=black},
                edge/.style={thick},
            ]
            \node[vert] (A) at (1*360/6-360/12:1) {};
            \node[vert] (B) at (3*360/6-360/12:1) {};
            \node[vert] (C) at (5*360/6-360/12:1) {};
            \node[vert] (D) at (0*360/6-360/12:3) {};
            \node[vert] (E) at (2*360/6-360/12:3) {};
            \node[vert] (F) at (4*360/6-360/12:3) {};
            \draw[edge] (A) -- (B) -- (C) -- (A);
            \draw[edge] (D) -- (E) -- (F) -- (D);
            \draw[edge] (A) -- (D) -- (C) -- (F) -- (B) -- (E) -- (A);
        \end{tikzpicture}
        \caption{Octahedron $\strongprod K_4$}
    \end{subfigure}
    \hfill
    \begin{subfigure}{0.4\textwidth}
        \centering
        \begin{tikzpicture}
            [
                vert/.style={circle,fill=white,draw=black},
                edge/.style={thick},
            ]
            \node[vert] (A1) at (0,0) {};
            \node[vert] (A2) at (0,2) {};
            \node[vert] (A3) at (0,4) {};
            \node[vert] (B1) at (2,0) {};
            \node[vert] (B2) at (2,2) {};
            \node[vert] (B3) at (2,4) {};
            \node[vert] (C1) at (4,0) {};
            \node[vert] (C2) at (4,2) {};
            \node[vert] (C3) at (4,4) {};
            \draw[edge] (A1) -- (A2) -- (A3) edge[bend left] (A1);
            \draw[edge] (B1) -- (B2) -- (B3) edge[bend left] (B1);
            \draw[edge] (C1) -- (C2) -- (C3) edge[bend left] (C1);
            \draw[edge] (A1) -- (B1) -- (C1) edge[bend left] (A1);
            \draw[edge] (A2) -- (B2) -- (C2) edge[bend left] (A2);
            \draw[edge] (A3) -- (B3) -- (C3) edge[bend left] (A3);
        \end{tikzpicture}
        \caption{$(K_3 \boxprod K_3) \modstrong K_2$ and $(K_3 \boxprod K_3) \strongprod K_3$}
        \label{fig:cartesian}
    \end{subfigure}
    \caption{Some examples of graphs where Theorem \ref{thm:complete} applies.}
    \label{fig:examples}
\end{figure}
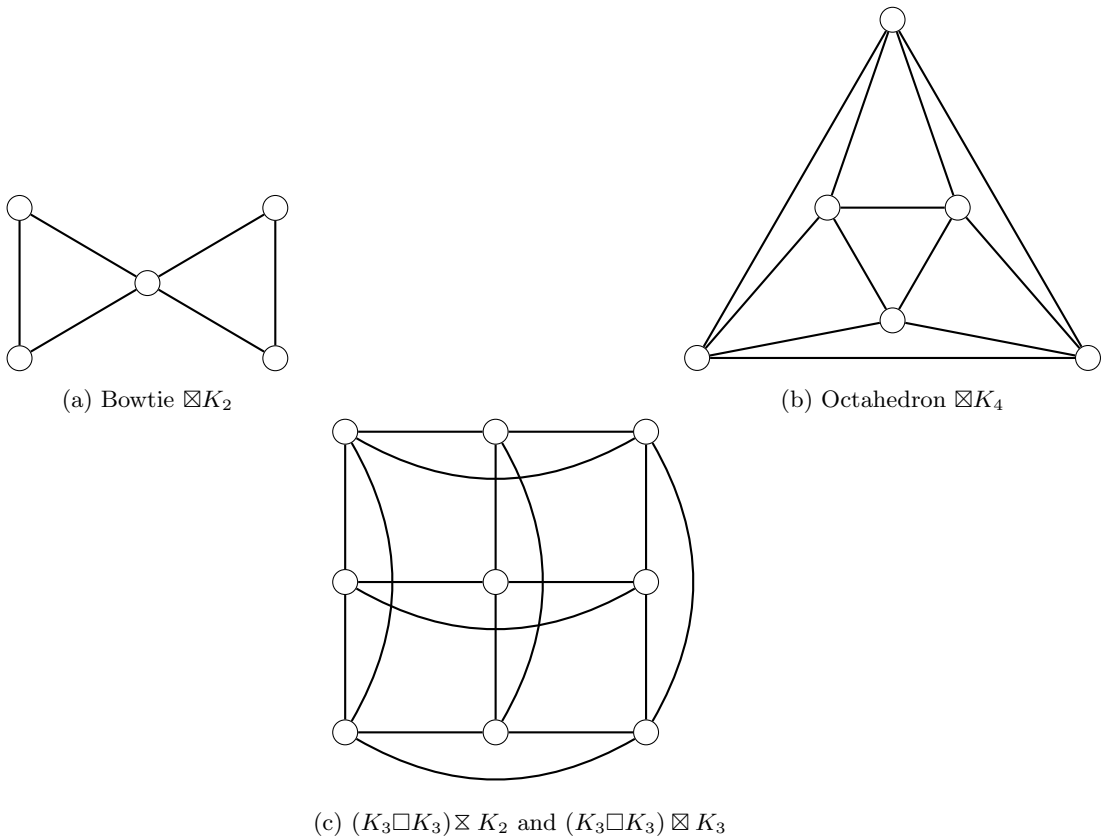

Figure \ref{fig:cartesian} is particularly illustrative of the power of this theorem. Any graph $K_l \boxprod K_l$ can have its edges partitioned into two spanning subgraphs whose components are each $K_l$. Therefore, by the theorem, $q((K_l \boxprod K_l) \modstrong K_2)=2$ for all $l$. 
We can also use Theorem \ref{thm:complete} case \ref{case:2} to conclude that $q((K_l \boxprod K_l) \strongprod K_3)=2$.% has $q$ number 2.

Finally, we remark that work in \cite{barrett2023sparsity} describes a family of graphs which they call \emph{double ended candles} which they show satisfies $q=2$, and they remark that duplicating the end vertices yields a graph that also has $q=2$. We have pictured these in Figure \ref{fig:candle}. We note that this last family can be realized simply as $P_k\modstrong K_2$ which will have $q=2$ by Theorem \ref{thm:onefactor} since the chromatic index of $P_k$ is 2.

Similarly, the paper \cite{barrett2024graphs} describes the \emph{closed candles} pictured in Figure \ref{fig:closed} and prove that they satisfy $q=2$.  We note that these arise simply as $C_k\modstrong K_2$, and so Theorem \ref{thm:complete} gives another proof that these satisfy $q=2$.
%\textcolor{red}{TODO: Describe how certain families of graps known to have q=2 arise from our theorems.  Do we need more of this??}

\begin{figure}
    \begin{subfigure}{0.4\textwidth}
          \centering
     \begin{tikzpicture}
        [
                vert/.style={circle,fill=white,draw=black},
                 edge/.style={thick},
             ]
          \draw[edge]   (0,0)node[vert]{}--(0,1)node[vert]{}--(0,2)node[vert]{}--(0,3)node[vert]{}(1,0)node[vert]{}--(1,1)node[vert]{}--(1,2)node[vert]{}--(1,3)node[vert]{} (0,0)--(1,1)--(0,2)--(1,3) (1,0)--(0,1)--(1,2)--(0,3);
         \end{tikzpicture}
         \caption{$q(P_k\modstrong K_2)=2$}\label{fig:candle}
    \end{subfigure}
     \begin{subfigure}{0.4\textwidth}
          \centering
        \begin{tikzpicture}
        [
                vert/.style={circle,fill=white,draw=black},
                 edge/.style={thick},
             ]
          \draw[edge]   (0:1)node[vert]{}--(60:1)node[vert]{}--(120:1)node[vert]{}--(180:1)node[vert]{}--(240:1)node[vert]{}--(300:1)node[vert]{}--(0:1)
          (0:2)node[vert]{}--(60:2)node[vert]{}--(120:2)node[vert]{}--(180:2)node[vert]{}--(240:2)node[vert]{}--(300:2)node[vert]{}--(0:2)
          (0:1)--(60:2)--(120:1)--(180:2)--(240:1)--(300:2)--(0:1)
          (0:2)--(60:1)--(120:2)--(180:1)--(240:2)--(300:1)--(0:2)
          ;
         \end{tikzpicture}
         \caption{$q(C_k\modstrong K_2)=2$}\label{fig:closed}
    \end{subfigure}
 
     \caption{Modified strong products of cycles and paths with $K_2$. }
     \label{fig:candles}
 \end{figure}
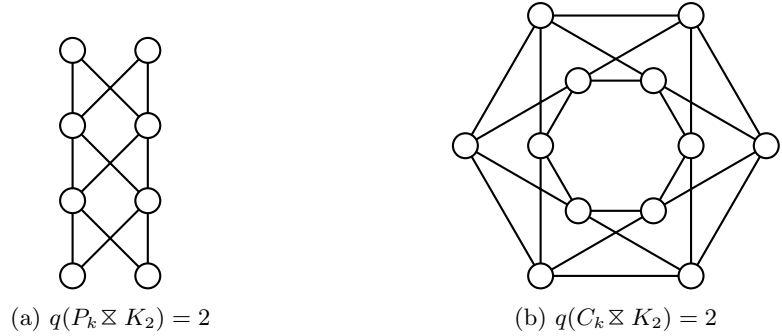

\section{Conclusion}
We have given several constructions of families of graphs that can achieve only two distinct eigenvalues.  We have done this primarily by way of Lemmas \ref{lem:eigenvals} and \ref{lem:blocks}.  In all of our constructions, we have somehow partitioned the edge set into pieces where we can write down 0-1 matrices for each piece that all have the same two eigenvalues.  It would be of interest to see how far this idea can be pushed.  Lemma 2.3 of \cite{ahmadi2013minimum} implies that any graph that can achieve two distinct eigenvalues can achieve \emph{any} two eigenvalues that we wish.  So any time we can partition the edge set into pieces $G_i$ where each $q(G_i)=2$, then we could weight the edges appropriately to make sure each subgraph had the same two eigenvalues.  Then taking the appropriate tensor products, Lemma \ref{lem:eigenvals} would yield a matrix for a new graph that would have exactly two distinct eigenvalues.  It is not clear, however, what the pattern of this new graph would be.  Since we have weighted edges, we cannot simply apply Lemma \ref{lem:blocks} to determine the pattern.  It would be of interest to characterize all graphs that arise in this way. It would also be of interest if there are other ``product-like" constructions that can give more examples that achieve two distinct eigenvalues.

\bigskip

\textbf{Acknowledgment:} The authors would like to thank Wayne Barrett for helpful discussions, especially around Lemma \ref{lem:existence}.

\pagebreak

\bibliographystyle{plain}
\bibliography{mybibfile}

\begin{thebibliography}{10}

\bibitem{abiad2023bordering}
Aida Abiad, Shaun~M Fallat, Mark Kempton, Rupert~H Levene, Polona Oblak, Helena
  {\v{S}}migoc, Michael Tait, and Kevin~N Vander~Meulen.
\newblock Bordering of symmetric matrices and an application to the minimum
  number of distinct eigenvalues for the join of graphs.
\newblock {\em Linear algebra and its applications}, 679:104--126, 2023.

\bibitem{adm2019achievable}
Mohammad Adm, Shaun Fallat, Karen Meagher, Shahla Nasserasr, Sarah Plosker, and
  Boting Yang.
\newblock Achievable multiplicity partitions in the inverse eigenvalue problem
  of a graph.
\newblock {\em Special Matrices}, 7(1):276--290, 2019.

\bibitem{ahmadi2013minimum}
Bahman Ahmadi, Fatemeh Alinaghipour, Michael~S Cavers, Shaun Fallat, Karen
  Meagher, and Shahla Nasserasr.
\newblock Minimum number of distinct eigenvalues of graphs.
\newblock {\em Electronic Journal of Linear Algebra}, 26:673--691, 2013.

\bibitem{barrett2020inverse}
Wayne Barrett, Steve Butler, Shaun~M Fallat, H~Tracy Hall, Leslie Hogben,
  Jephian C-H Lin, Bryan~L Shader, and Michael Young.
\newblock The inverse eigenvalue problem of a graph: Multiplicities and minors.
\newblock {\em Journal of Combinatorial Theory, Series B}, 142:276--306, 2020.

\bibitem{barrett2023sparsity}
Wayne Barrett, Shaun Fallat, Veronika Furst, Franklin Kenter, Shahla Nasserasr,
  Brendan Rooney, Michael Tait, and Hein van~der Holst.
\newblock Sparsity of graphs that allow two distinct eigenvalues.
\newblock {\em Linear Algebra and its Applications}, 674:377--395, 2023.

\bibitem{barrett2023regular}
Wayne Barrett, Shaun Fallat, Veronika Furst, Shahla Nasserasr, Brendan Rooney,
  and Michael Tait.
\newblock Regular graphs of degree at most four that allow two distinct
  eigenvalues.
\newblock {\em Linear Algebra and its Applications}, 679:127--164, 2023.

\bibitem{barrett2024graphs}
Wayne Barrett, Shaun Fallat, Veronika Furst, Shahla Nasserasr, Brendan Rooney,
  and Michael Tait.
\newblock Graphs with bipartite complement that admit two distinct eigenvalues.
\newblock {\em arXiv preprint arXiv:2411.12917}, 2024.

\bibitem{barrett2015generalizations}
Wayne Barrett, Shaun Fallat, H~Tracy Hall, Leslie Hogben, Jephian C-H Lin, and
  Bryan~L Shader.
\newblock Generalizations of the strong arnold property and the minimum number
  of distinct eigenvalues of a graph.
\newblock {\em Electronic Journal of Combinatorics}, 24(2):Paper No. 2.40, 28,
  2017.

\bibitem{aim2008zero}
AIM Minimum Rank-Special Graphs~Work Group et~al.
\newblock Zero forcing sets and the minimum rank of graphs.
\newblock {\em Linear algebra and its applications}, 428(7):1628--1648, 2008.

\bibitem{hogben2022inverse}
Leslie Hogben, Jephian C-H Lin, and Bryan~L Shader.
\newblock {\em Inverse problems and zero forcing for graphs}, volume 270.
\newblock American Mathematical Society, 2022.

\bibitem{leal2002minimum}
Ant{\'o}nio Leal-Duarte and Charles~R Johnson.
\newblock On the minimum number of distinct eigenvalues for a symmetric matrix
  whose graph is a given tree.
\newblock {\em Mathematical Inequalities and Applications}, 5:175--180, 2002.

\bibitem{levene2019nordhaus}
Rupert~H Levene, Polona Oblak, and Helena {\v{S}}migoc.
\newblock A nordhaus--gaddum conjecture for the minimum number of distinct
  eigenvalues of a graph.
\newblock {\em Linear Algebra and its Applications}, 564:236--263, 2019.

\bibitem{levene2022orthogonal}
Rupert~H Levene, Polona Oblak, and Helena {\v{S}}migoc.
\newblock Orthogonal symmetric matrices and joins of graphs.
\newblock {\em Linear Algebra and its Applications}, 652:213--238, 2022.

\end{thebibliography}
\end{document}